\def\@cite#1#2{{\m@th\upshape\bfseries%
[{#1\if@tempswa{\m@th\upshape\mdseries, #2}\fi}]}} \makeatother
\theoremstyle{plain}
\newtheorem{thm}[subsection]{Theorem}
\newtheorem{cor}[subsection]{Corollary}
\newtheorem{prop}[subsection]{Proposition}
\newtheorem{lem}[subsection]{Lemma}
\theoremstyle{definition}
\newtheorem{rem}[subsection]{Remark}
\newcommand{\bC}{{\mathbb{C}}}
\newcommand{\bM}{{\mathbb{M}}}
\newcommand{\bN}{{\mathbb{N}}}
\newcommand{\bQ}{{\mathbb{Q}}}
\newcommand{\bR}{{\mathbb{R}}}
\newcommand{\bT}{{\mathbb{T}}}
\newcommand{\bZ}{{\mathbb{Z}}}
\newcommand{\bm}{{\mathbf{m}}}
\newcommand{\bn}{{\mathbf{n}}}
\newcommand{\bt}{{\mathbf{t}}}
\renewcommand{\O}{{\mathcal{O}}}
\renewcommand{\S}{{\mathcal{S}}}
\newcommand{\fA}{{\mathfrak{A}}}
\newcommand{\fF}{{\mathfrak{F}}}
\newcommand{\fM}{{\mathfrak{M}}}
\renewcommand{\phi}{\varphi}
\newcommand{\upchi}{{\raise.35ex\hbox{\ensuremath{\chi}}}}
\newcommand{\qforal}{\quad\text{for all}\quad}
\newcommand{\Ad}{\operatorname{Ad}}
\newcommand{\id}{{\operatorname{id}}}
\newcommand{\spn}{\operatorname{span}}
\newcommand{\ca}{\mathrm{C}^*}
\newcommand{\Fn}{\mathbb{F}_n^+}
\newcommand{\Fm}{\mathbb{F}_m^+}
\newcommand{\Fth}{\mathbb{F}_\theta^+}
\newcommand{\mt}{\varnothing}
\newcommand{\ol}{\overline}
\begin{document}
\title[ ]
{Type III von Neumann Algebras \\ associated with $\O_\theta$ }
\author[D. Yang]
{Dilian Yang}
\address{Dilian Yang,
Department of Mathematics $\&$ Statistics, University of Windsor, Windsor, ON
N9B 3P4, CANADA} \email{dyang@uwindsor.ca}

\begin{abstract}
Let $\Fth$ be a 2 graph generated by $m$ blue edges and $n$ red edges,
and $\omega$ be the distinguished faithful state
associated with its graph C*-algebra $\O_\theta$.

In this paper, we characterize the factorness of the von Neumann algebra
$\pi_\omega(\O_\theta)''$ induced from the GNS representation of $\omega$ under a certain condition.
Moreover, when $\pi_\omega(\O_\theta)''$ is a factor, then it is 
of type III$_{m^{-\frac{1}{b}}}$ (or III$_{n^{-\frac{1}{a}}}$) if $\frac{\ln m}{\ln n}\in\bQ$,
where $a,b\in\bN$ with $\gcd(a,b)=1$ satisfy $m^a=n^b$,
and of type III$_1$ if $\frac{\ln m}{\ln n}\not\in\bQ$.
In the case of $\theta$ being the identity permutation, our condition turns out to be redundant.

On the way to our main results, we also obtain the structure of the fixed point algebra $\O_\theta^\sigma$ of the modular action
$\sigma$ from $\omega$.
This could be useful in proving the redundancy of our extra condition.

\end{abstract}

\subjclass[2000]{46L36, 47L65, 46L10, 46L05.}
\keywords{Rank 2 graph algebra, type III von Neumann algebra, crossed product, KMS-state}
\thanks{The author was supported in part by an NSERC Discovery grant.}

\date{}
\maketitle

\section{Introduction}
\label{Intr}

In this paper, continuing \cite{DPY1, DPYdiln, DYperiod, Yang}, we study rank 2 graph algebras with a single vertex.
Roughly speaking, a rank 2 graph with a single vertex $\Fth$ is a 2-dimensional directed graph generated by blue and red edges
satisfying some commutation relations.
More precisely, consider two types of letters $e_1,\ldots, e_m$ and $f_1,\ldots, f_n$, which can be regarded
as the $m$ blue and $n$ red edges, respectively. Let $\theta$ be a given
permutation of $m\times n$. Construct a unital semigroup $\Fth$ with generators $e_i$'s and $f_j$'s, such that
it is free in $e_i$'s and free in $f_j$'s, and has the commutation relations
$e_if_j=f_{j'}e_{i'}$ where $\theta(i,j)=(i',j')$ for all $1\le i\le m$, $1\le j\le n$.
Those graphs are a special and important class of rank $k$ graphs (also called higher rank graphs), which
have been recently attracting a great deal of attention. See, e.g., \cite{KumPask, PaskRRS, Raeburn, RaeSimYee1,
RaeSimYee2} and the references therein.

Rank 2 graphs were first investigated by Power in \cite{P1}, where the associated non-commutative Toeplitz algebras
were studied in detail.
Then with Davidson and Power, we systematically studied the representation theory and
dilation theory of rank 2 graph algebras in \cite{DPY1, DPYdiln}.
We gave a general group construction for their atomic representations and completely classified them in \cite{DPY1}.
In order to study representations, it is often useful to consider dilation theory. In \cite{DPYdiln}, it was shown that
every defect free row contractive representation has a unique minimal *-dilation. Those two papers are basically concerned
with the non-selfadjoint operator algebras associated with $\Fth$. In \cite{DYperiod}, Davidson and I
gave some characterizations of the aperiodicity of $\Fth$, and described the structure of its graph
C*-algebra $\O_\theta$.

Since rank 2 graph C*-algebras $\O_\theta$ are a generalization of Cuntz algebras,
motivated by the work \cite{Cun},
we considered the (unital) endomorphisms and modular theory of $\O_\theta$ in \cite{Yang}. It turns out that
many results in Cuntz algebras are naturally generalized to $\O_\theta$. We proved that
there is a bijection between endomorphisms of $\O_\theta$ and its unitary pairs with
a twisted property. The twisted property comes up naturally as it decodes the commutation
relations of $\Fth$. As in Cuntz algebras, the bijection plays a vital role in the analysis of
properties of endomorphisms.

Among endomorphisms of $\O_\theta$, gauge automorphisms play a special role.  The integration of
gauge automorphisms over the 2-torus $\bT^2$ gives a faithful expectation $\Phi$ of $\O_\theta$
onto the fixed point algebra $\fF$ of the gauge action $\gamma$. It is known that $\fF$ is of $(mn)^\infty$ type UHF algebra.
Particularly, $\fF$ has a unique tracial state, denoted by $\tau$. Then $\tau$
and $\Phi$ induce the distinguished
faithful state $\omega$ of $\O_\theta$, namely, $\omega:=\tau\circ \Phi$. Let $\pi_\omega(\O_\theta)''$
denote the von Neumann algebra induced from the GNS representation of $\omega$.
In \cite{Yang}, we obtained an explicit formula for the modular automorphisms $\{\sigma_t:t\in \bR\}$ of $\pi_\omega(\O_\theta)''$
associated with $\omega$.
Identifying $\O_\theta$ with $\pi_\omega(\O_\theta)$, we obtain a C*-dynamical system $(\O_\theta, \bR, \sigma)$.
It was proved there that if $\frac{\ln m}{\ln n}\not\in\bQ$, then $\omega$ is the unique $\sigma$-KMS state over $\O_\theta$,
and $\pi_\omega(\O_\theta)''$ is an AFD factor of type III$_1$.

As remarked in \cite{Yang}, $\frac{\ln m}{\ln n}\not\in\bQ$ implies that $\Fth$ is aperiodic;
however, the converse is not true (cf. \cite{DYperiod}).
Also, it was shown in \cite{DYperiod} that $\Fth$ is aperiodic iff $\O_\theta$ is simple.
So one naturally wonders if the following holds true: $\pi_\omega(\O_\theta)''$ is a factor iff $\Fth$ is aperiodic.
From the above, it suffices to see if the aperiodicity of $\Fth$ with $\frac{\ln m}{\ln n}\in\bQ$ implies the factorness of $\pi_\omega(\O_\theta)''$.
If this is the case, one could also further ask its type.

In this paper, we partially answer the above questions.
We show that, under the assumption that the fixed point algebra $\O_\theta^\sigma$ of the modular automorphism
action $\sigma$
has a unique tracial state,
$\pi_\omega(\O_\theta)''$ is a factor $\Longleftrightarrow$ $\Fth$ is aperiodic $\Longleftrightarrow$
$\omega$ is the unique $\sigma$-KMS state over $\O_\theta$ (Theorem \ref{T:factor}).
Moreover, there is a dichotomy for the type of $\pi_\omega(\O_\theta)''$:
if $\frac{\ln m}{\ln n}\in\bQ$, it is of type III$_{m^{-\frac{1}{b}}}$ (or III$_{n^{-\frac{1}{a}}}$),
where $a,b\in\bN$ with $\gcd(a,b)=1$ satisfy $m^a=n^b$; and it is of type III$_1$ if $\frac{\ln m}{\ln n}\not\in\bQ$
(Theorem \ref{T:apefactor}).
Of course, if $\frac{\ln m}{\ln n}\not\in\bQ$, then our assumption is redundant as $\O_\theta^\sigma=\fF$.
For the case where $\frac{\ln m}{\ln n}\in\bQ$, the structure theorem of $\O_\theta^\sigma$
(Theorem \ref{T:crossprod}), which says that $\O_\theta^\sigma$ is a crossed product of $\fF$
by $\bZ$, might be useful in proving that our assumption could be redundant
(cf., e.g.,  \cite{Bedos1, Bedos2, Thom}, and also refer to Remark \ref{R:assump}).

The rest of this paper is organized as follows. In Section \ref{S:Pre}, we will give some preliminaries about rank 2 graph algebras,
which will be used later. The fixed point algebra 
$\O_\theta^\sigma$ will be studied in Section \ref{S:cropro}. It is shown that
if $\Fth$ is aperiodic, then $\O_\theta^\sigma$ is either $\fF$, or a crossed product
of $\fF$ by $\bZ$, and so particularly it is simple.
In Section \ref{S:KMS}, we analyze
the KMS-states of the C*-dynamical system $(\O_\theta, \bR, \sigma)$, and prove that $\omega$ is the unique $\sigma$-KMS state
of $\O_\theta^\sigma$
if $\O_\theta^\sigma$ has a unique tracial state. However,
if $\theta$ is the identity permutation (i.e., $\theta=\id$),
then our assumption is redundant. Making use of the results in Section \ref{S:KMS},
we are able to determine the type of $\pi_\omega(\O_\theta)''$ in Section \ref{S:type}. 
It is of either type III$_\lambda$ ($0<\lambda<1$) or type III$_1$, which is dependent of whether $\frac{\ln m}{\ln n}\in\bQ$ or not.
The type of $\pi_\omega(\O_\id)''$
is obtained without any further assumptions. But the proof needs a different approach.

\section{Preliminaries}\label{S:Pre}

A rank 2 graph on a single vertex is a unital semigroup
$\Fth$, which is generated by $e_1 ,\ldots, e_m$ and
$f_1 , \ldots, f_n$.
The identity is denoted as $\mt$. There are no relations among the $e_i$'s, so they
generate a copy of the free semigroup $\Fm$ on $m$ letters; and there are no
relations on the $f_j$'s, so they generate a copy of $\Fn$. There are
\textit{commutation relations} between the $e_i$'s and $f_j$'s
given by a permutation $\theta$ in $S_{m\times n}$ of $m \times  n$:
\[
e_if_j=f_{j'}e_{i'} \quad \mbox{where}\quad \theta(i, j ) = (i' , j').
\]
It is known that any word $w\in\Fth$ has fixed numbers of $e$'s and $f$'s
regardless of the factorization. So one can define the \textit{degree} of  $w$ as $d(w)=(k,l)$ if there are $k$ $e$'s and
$l$ $f$'s, and the \textit{length} of $w$ as $|w|=k+l$. Moreover, because of the commutation relations, one can
write $w$ according to any prescribed pattern of $e$'s and $f$'s as long as the degree is $(k,l)$.

Let us recall from \cite{KumPask} that the \textit{graph C*-algebra $\O_{\theta}$} of $\Fth$ is
the universal $C^*$-algebra generated by
a family of isometries $\{s_u:u\in\Fth\}$ satisfying
$s_{\mt}=I$, $s_{uv}=s_us_v$ for all $u,\ v\in \Fth$,
and the defect free property:
\[
\sum_{i=1}^ms_{e_i}s_{e_i}^*=I=\sum_{j=1}^ns_{f_j}s_{f_j}^*.
\]

It is well-known that $s_us_v^*$ ($u, v\in \Fth$) are the standard generators of $\O_\theta$,
and that
$$\O_\theta=\overline{\spn}\{s_us_v^*:u,v\in\Fth\}$$
(\cite[Lemma 3.1]{KumPask}).
For convenience, as in \cite{Yang}, we extend the degree map $d$ of $\Fth$ to
the standard generators of $\O_\theta$ by
$$d(s_us_v^*)=d(u)-d(v)\qforal u,v\in\Fth.$$

The universal property of $\O_{\theta}$ yields a family of
\textit{gauge automorphisms}
$\gamma_{\bt}$ for $\bt=(t_1, t_2) \in \bT^2$
given by
\[
 \gamma_{\bt}(s_w) =\bt^{d(w)} s_w\qforal w\in\Fth,
\]
where $\bt^{d(w)}=t_1^{n_1}t_2^{n_2}$ if $d(w)=(n_1,n_2)$.
Integrating around $\bT^2$ yields a faithful expectation
\[ \Phi(X) = \int_{\bT^2}  \gamma_{\bt}(X) \,d\bt\]
onto the fixed point algebra $\O_{\theta}^\gamma$ of $\gamma$.

It turns out that
$$\fF:=\O_{\theta}^\gamma=\ol{\bigcup_{k\ge 1} \fF_k}$$
is an $(mn)^\infty$-UHF algebra,
where
$$\fF_k=\ol\spn\{ s_us_v^* : d(u)=d(v)=(k,k),\ k\in\bN\}$$
is the full matrix algebra $\bM_{(mn)^k}$.
Hence there is a unique faithful tracial state, say $\tau$, on $\fF$.
With $\Phi$, $\tau$ induces a distinguished faithful state $\omega$ on $\O_\theta$ via
$$
\omega(X)=\tau(\Phi(X))\quad \text{for all}\quad X\in\O_\theta.
$$
Let $\pi_{\omega}(\O_\theta)''$ be the von Neumann algebra generated by the (faithful) GNS representation of
$\omega$. When no confusion is caused, we shall omit the index $\omega$ and write $\pi(\O_\theta)''$, instead of $\pi_\omega(\O_\theta)''$, for short.
Then $\omega$ can be (uniquely) extended to a normal faithful state on $\pi(\O_\theta)''$. The extension will be
still denoted by $\omega$.

\smallskip

If $\Fth$ is periodic, then it is well-studied in \cite{DYperiod}. So, throughout the remainder of this paper, we assume that
$\Fth$ is aperiodic, unless otherwise specified. 
Equivalently, its graph C*-algebra $\O_\theta$ is simple. 
In particular, this implies that $m>$ and $n>1$.


\section{The structure of the fixed point algebra $\O_\theta^\sigma$}
\label{S:cropro}

From \cite{Yang},
the parameter group of modular automorphisms of $\omega$
on $\pi(\O_\theta)''$ was given explicitly by the following:
\begin{align}
\label{E:sigma}
\sigma_t(\pi_\omega(s_us_v^*))
=\bn^{it(d(v)-d(u))}\pi_\omega(s_us_v^*)\quad (t\in \bR).
\end{align}
Here the multi-index notation is used: for $(x,y)\in\bC^2$, $\bn^{(x,y)}=m^xn^y$. Also, recall that
$m^x=\exp(x\ln m)$ and similarly for $n^y$.

In particular, we have
$$
\sigma_t(\pi_\omega(s_{e_i}))=m^{-it}\pi_\omega(s_{e_i}),\quad
\sigma_t(\pi_\omega(s_{f_j}))=n^{-it}\pi_\omega(s_{f_j})
$$
for all $i=1,...,m$, $j=1,...,n$.
Since $\omega$ is faithful, we can identify $\O_\theta$ with $\pi_\omega(\O_\theta)$.
Then $\{\sigma_t:t\in \bR\}$ yields a parameter group of automorphisms of $\O_\theta$.
So we have a C*-dynamical system $(\O_\theta, \bR, \sigma)$, which is simply denoted by $(\O_\theta, \sigma)$.

In \cite{Yang}, we see that the fixed point algebra $\O_\theta^\sigma$ of $\sigma$ is nothing but $\fF$ if $\frac{\ln m}{\ln n}\not\in\bQ$.
In order to study $\O_\theta^\sigma$ in general, we only need to consider the case of $\frac{\ln m}{\ln n}\in\bQ$. That is,
\begin{align}
\mbox{there are}\ a, b\in\bN \ \mbox{with}\ \gcd(a,b)=1 \ \mbox{such that}\ m^a=n^b.
\tag{$*$}
\end{align}
So in the remainder of this section, we assume that ($*$) holds true.
By $\bm^a$ (resp. $\bn^b$), we mean the set of all words in $e_i$'s
(resp. $f_j$'s) of length $a$ (resp. $b$).
Let $\jmath: \bm^a\to \bn^b$ be a fixed bijection (such a bijection exists because of {($*$)} ). Define
$$U=\sum_{u\in\bm^a}s_{e_u}s_{f_{\jmath(u)}}^*.$$
Then it is easy to see that $U$ is a unitary operator. In fact, due to the defect free property, one has
$$UU^*
=\sum_{u\in\bm^a}s_{e_u}s_{f_{\jmath(u)}}^* \sum_{v\in\bm^a}s_{f_{\jmath(v)}}s_{e_v}^*
=\sum_{u\in\bm^a}s_{e_u}s_{e_{u}}^*
=I,
$$
and similarly $U^*U=I$.
Define an automorphism $\rho$ of $\fF$ by $\rho=\Ad(U)$:
$$\rho(X)=UXU^*\quad \text{for all} \quad X\in \fF.$$

We are in a position to prove the main theorem of this section,
which gives the structure of the fixed point algebra $\O_\theta^\sigma$ of $\sigma$.

\begin{thm}
\label{T:crossprod}
Let $\rho, a, b$ be the same as above.
Then
$\O_\theta^\sigma \cong \fF\times_\rho \bZ$. Moreover, $\O_\theta^\sigma$ is simple.
\end{thm}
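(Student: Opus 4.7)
My plan is to show that $\O_\theta^\sigma$ is generated by $\fF$ together with the unitary $U$, identify it with $\fF\times_\rho\bZ$ via a dual-action-style conditional expectation, and derive simplicity from Kishimoto's theorem.

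First I would verify $U\in\O_\theta^\sigma$: each summand $s_{e_u}s_{f_{\jmath(u)}}^*$ has degree $(a,-b)$, so \eqref{E:sigma} multiplies it by $m^{-ita}n^{itb}=1$ by $(*)$. By additivity of degree, $UXU^*$ has degree $(0,0)$ for every $X\in\fF$, hence lies in $\fF=\O_\theta^\gamma$; this shows $\rho=\Ad U$ restricts to an automorphism of $\fF$. For $\O_\theta^\sigma=\ca(\fF,U)$, note that under $(*)$ and $\gcd(a,b)=1$, a standard generator $s_us_v^*$ lies in $\O_\theta^\sigma$ exactly when $d(u)-d(v)=(pa,-pb)$ for some $p\in\bZ$; in that case $s_us_v^*(U^*)^p\in\fF$, so $s_us_v^*\in\ca(\fF,U)$, and the reverse inclusion is immediate.

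Next I would construct a $\bT$-action on $\O_\theta^\sigma$ that implements the dual $\bZ$-grading. By B\'ezout, pick $c,d\in\bZ$ with $ca-db=1$ and set $\alpha_t:=\gamma_{(t^c,t^d)}|_{\O_\theta^\sigma}$; then $\alpha_t|_\fF=\id$ and $\alpha_t(U)=tU$. Averaging over $\alpha$ yields a faithful conditional expectation $E:\O_\theta^\sigma\to\fF$ killing $XU^n$ for $n\ne 0$. The universal property of the full crossed product (equal to the reduced one since $\bZ$ is amenable) then produces a surjective $*$-homomorphism $\pi:\fF\times_\rho\bZ\twoheadrightarrow\O_\theta^\sigma$; injectivity follows from the standard argument that $\pi$ intertwines $E$ with the canonical conditional expectation on the crossed product and $E$ is faithful.

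For simplicity I would apply Kishimoto's theorem: $\fF$ is a simple UHF algebra, so it suffices to check that $\rho^n$ is outer in $\fF$ for every $n\ne 0$. If $\rho^n=\Ad W$ for some unitary $W\in\fF$, then $U^nW^*\in\fF'\cap\O_\theta$ would be a nonscalar unitary of degree $(na,-nb)\ne(0,0)$, contradicting the expected relative-commutant identity $\fF'\cap\O_\theta=\bC$ under aperiodicity of $\Fth$. The main obstacle I foresee is precisely pinning down $\fF'\cap\O_\theta=\bC$: I would first attempt to cite it from \cite{DYperiod}, and as a fallback decompose a candidate $X\in\fF'\cap\O_\theta$ into its $\gamma$-homogeneous components and use aperiodicity together with the Cuntz--Krieger relations to rule out any piece of nonzero degree.
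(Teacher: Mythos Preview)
Your proposal is correct and follows the same overall architecture as the paper's proof---identify $\O_\theta^\sigma=\ca(\fF,U)$, show all nonzero powers of $\rho$ are outer via the relative commutant, and invoke Kishimoto---but it differs in two implementation details worth noting. First, your argument for $\O_\theta^\sigma=\ca(\fF,U)$ is slicker: since $U$ is $\gamma$-homogeneous of degree $(a,-b)$, the element $s_us_v^*(U^*)^p$ is $\gamma$-invariant and hence lies in $\fF=\O_\theta^\gamma$ in one stroke; the paper instead shows by induction on $k$ that every standard generator of degree $k(a,-b)$ lies in $\fF U^k$. Second, for the isomorphism $\ca(\fF,U)\cong\fF\times_\rho\bZ$ you manufacture a dual $\bT$-action $\alpha_t=\gamma_{(t^c,t^d)}$ via B\'ezout and use the resulting faithful conditional expectation onto $\fF$ to verify injectivity of the canonical surjection, whereas the paper first establishes that $\rho$ is aperiodic (so $\fF\times_\rho\bZ$ is simple) and then concludes the surjection is injective directly from simplicity. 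Your route cleanly separates the isomorphism from the simplicity statement, at the cost of the extra B\'ezout gadget; the paper's route is shorter once outerness is in hand, since simplicity does double duty. One minor correction: the relative-commutant fact $\fF'\cap\O_\theta=\bC 1$ for aperiodic $\Fth$ is the one from \cite{Yang}, not \cite{DYperiod}.
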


\begin{proof}
We first claim that
\begin{align}
\label{E:fixe1}
\O_\theta^\sigma=\ca\{X\in\O_\theta: d(X)\in\bZ (a,-b)\}.
\end{align}
By the definition of $\sigma$, it is easy to check that,  the right hand side of \eqref{E:fixe1} is included in its
left hand side. In order to show the reverse inclusion, after a moment's thought,
it suffices to verify that the generators of $\O_\theta^\sigma$ are contained in the right hand side of \eqref{E:fixe1}. 
To this end, let $X=s_us_v^*\in\O_\theta^\sigma$. Since $\sigma_t(X)=X$ for all $t\in\bR$, we have
$\bn^{it(d(v)-d(u))}=1$ for all $t\in\bR$. Assume that $d(v)-d(u)=(x,y)\in\bZ^2$. Then
a simple calculation shows that
$bx+ay=0$. Since $\gcd(a,b)=1$, we have
$(x,y)\in\bZ (a,-b)$.
This proves our claim.

In what follows, we show that $\O_\theta^\sigma$ is the C*-algebra generate by $\fF$ and $U$:
\begin{align}
\label{E:fixcros}
\O_\theta^\sigma=\ca(\fF, U).
\end{align}
From \eqref{E:fixe1}, evidently, $\ca(\fF, U)\subseteq \O_\theta^\sigma$. In order to show
$\O_\theta^\sigma\subseteq\ca(\fF, U)$, thanks to \eqref{E:fixe1}, we only need to prove
 that all elements $X\in\O_\theta$ with $d(X)\in\bZ(a,-b)$ belong to $\ca(\fF,U)$.
For this, we first assume
$X=s_{e_u}s_{f_v}^*$ with $(|u|,|v|)=k(a,b)$ for some $k\in \bN$.
Then we prove $X\in\fF U^k$ by induction.
Indeed, if $k=1$, then one has
\begin{align*}
X=s_{e_u}s_{f_v}^*
   =s_{e_u}s_{e_{\jmath^{-1}(v)}}^*s_{e_{\jmath^{-1}(v)}}s_{f_v}^*
   =s_{e_u}s_{e_{\jmath^{-1}(v)}}^*U
   \in\fF U,
 \end{align*}
where the last equality used the identity $s_{e_{\jmath^{-1}(v)}}^* s_{e_u}=\delta_{u, \jmath^{-1}(v)}$. Here 
$\delta$, as usual, is the Dirac delta function.
Now assume that
$s_{e_u}s_{f_v}^*$ with $(|u|,|v|)=k(a,b)$ belongs to $\fF U^k$.
Consider $X=s_{e_\alpha}s_{f_\beta}^*$ with $(|\alpha|,|\beta|)=(k+1)(a,b)$.
Then
$X=s_{e_{u_0}}s_{e_u}s_{f_v}^*s_{f_{v_0}}^*$
with $(|u|,|v|)=k(a,b)$ and $(|u_0|,|v_0|)=(a,b)$,
where $\alpha=u_0u$ and $\beta=v_0v$.
By the inductive assumption, one has
\begin{align*}
X=s_{e_\alpha}s_{f_\beta}^*
&\in s_{e_{u_0}}\fF U^k s_{f_{v_0}}^*\\
&=s_{e_{u_0}}\fF U^ks_{e_{\jmath^{-1}(v_0)}}^*s_{e_{\jmath^{-1}(v_0)}} s_{f_{v_0}}^*\\
&=s_{e_{u_0}}\fF U^ks_{e_{\jmath^{-1}(v_0)}}^*U
      \quad (\text{using} \ s_{e_{\jmath^{-1}(v)}}^* s_{e_u}=\delta_{u, \jmath^{-1}(v)})\\
&=s_{e_{u_0}}\fF U^ks_{e_{\jmath^{-1}(v_0)}}^*{U^*}^k\cdot U^{k+1}\\
&\subseteq \fF U^{k+1}.
\end{align*}
Therefore we obtain that, for all $n\in\bN$, $s_{e_u}s_{f_v}^*$ with $(|u|,|v|)=n(a,b)$ belong to $\fF U^n$.

From \eqref{E:fixe1}, one can assume that the most general form of the standard generator $X$
of $\O_\theta^\sigma$ is given by
$X=s_{w_1}s_{e_u}s_{f_v}^*s_{w_2}^*$ where $w_1,w_2\in\Fth$ with $d(w_1)=d(w_2)$ and $(|u|,|v|)=k(a,b)$.
From the above analysis, $s_{e_u}s_{f_v}^*\in \fF U^k$. Then
\begin{align*}
X\in s_{w_1}\fF U^k s_{w_2}^*=s_{w_1}\fF U^k s_{w_2}^*{U^*}^k \cdot U^k\subseteq \fF U^k.
\end{align*}
Therefore
 $\O_\theta^\sigma \subseteq \ca(\fF,U)$, implying
 $\O_\theta^\sigma = \ca(\fF,U)$.

Finally we prove
\begin{align}
\label{E:FU}
\ca(\fF, U)\cong \fF\times_\rho \bZ.
\end{align}
Once this is done, it immediately follows from \eqref{E:fixcros} and \eqref{E:FU} that $\fF\times_\rho \bZ\cong\O_\theta$,
and particularly that $\O_\theta^\sigma$ is simple.

In order to prove \eqref{E:FU}, notice that $\rho$ is an outer automorphism of $\fF$. Indeed, if $\rho$ were inner, then
there would be a unitary operator $V\in\fF$ such that $\rho(X)=VXV^*$. Thus
$UXU^*=VXV^*$ for all $X\in\fF$. So $V^*UX=XV^*U$. That is,
$V^*U$ is in the relative commutant, $\fF'\cap \O_\theta$, of $\fF$. But since $\Fth$ is aperiodic
(from our long standing assumption),
$V^*U$ is a scalar by \cite{Yang}. This implies $U\in\fF$, a contradiction.
The completely same reasoning gives that every $\rho^n$ ($n\in \bZ, n\ne 0$) is outer.
Equivalently, $\rho$ is an aperiodic automorphism of $\fF$.

Since $\fF$ is simple and $\rho$ is aperiodic, it is well-known that
$\fF\times_\rho \bZ$ is also simple
(cf., e.g., \cite{Bedos1, Krish}).
From the universal property of crossed product $\fF\times_\rho \bZ$, there is a *-homomorphism
from $\fF\times_\rho \bZ$  onto $\ca(\fF, U)$. The simplicity of $\fF\times_\rho \bZ$ implies that the *-homomorphism
is also injective. This concludes that $\fF\times_\rho \bZ$  and $\ca(\fF, U)$ are *-isomorphic.
We are done.
\end{proof}

\begin{rem}
From the definition of the above unitary operator $U$, it depends on the bijection $\jmath$ we have chosen. However, if $\tilde U$ is
another unitary operator given by another bijection $\tilde \jmath$, then we still have
$\O_\theta^\sigma = \ca(\fF,\tilde U)\cong \fF\times_{\tilde \rho} \bZ$,
where $\tilde \rho=\Ad \tilde U$.
This is because
$$\tilde U
=\sum_{u\in\bm^a}s_{e_u}s_{f_{\tilde \jmath(u)}}^*
=\sum_{u\in\bm^a}s_{e_u}s_{f_{\jmath(u)}}^* s_{f_{\jmath(u)}}s_{f_{\tilde \jmath(u)}}^*
=Us_{f_{\jmath(u)}}s_{f_{\tilde \jmath(u)}}^*
\in U\fF.
$$
Thus the choice of $U$ (i.e., $\jmath$) in Theorem \ref{T:crossprod} is not essential.

\end{rem}

It follows from Theorem \ref{T:crossprod} that

\begin{cor}
\label{C:indep}
$\O_\theta^\sigma$ is independent of $\theta$. More precisely,
for two permutations $\theta_1$ and $\theta_2$ of $m\times n$, we have
$\O_{\theta_1}^{\sigma^1}\cong \O_{\theta_2}^{\sigma^2}$, where
$\sigma^1$ and $\sigma^2$ are induced from the modular actions on $\pi(\O_{\theta_1})''$
and $\pi(\O_{\theta_2})''$, respectively.
\end{cor}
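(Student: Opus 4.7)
The plan is to apply Theorem \ref{T:crossprod} to both $\O_{\theta_1}^{\sigma^1}$ and $\O_{\theta_2}^{\sigma^2}$ and then argue that the resulting crossed products are mutually isomorphic. If $\frac{\ln m}{\ln n}\not\in\bQ$, then as noted at the beginning of Section \ref{S:cropro}, $\O_{\theta_i}^{\sigma^i}$ is simply the gauge fixed point algebra $\fF^{(i)}\subset \O_{\theta_i}$, which is the $(mn)^\infty$-UHF algebra for both $i=1,2$; since this UHF algebra is determined up to isomorphism by $m$ and $n$ alone, the conclusion is immediate. Hence assume condition $(*)$ holds.

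Under $(*)$, by Theorem \ref{T:crossprod} we have $\O_{\theta_i}^{\sigma^i}\cong\fF^{(i)}\times_{\rho_i}\bZ$, where $\rho_i=\Ad U_i$ is outer. Since both $\fF^{(i)}$ are, as abstract C*-algebras, the $(mn)^\infty$-UHF algebra $\fF$, after fixing an isomorphism we may regard $\rho_1$ and $\rho_2$ as two outer automorphisms of one and the same $\fF$. Both preserve the unique trace on $\fF$ and act trivially on $K_0(\fF)=\bZ[1/(mn)]$ (the latter because any order- and unit-preserving automorphism of $\bZ[1/(mn)]$ is the identity). I would then invoke a classification theorem---for instance, Kishimoto's cocycle-conjugacy result for aperiodic outer $\bZ$-actions on UHF algebras sharing the same induced action on $K_0$---to conclude that $\rho_1$ and $\rho_2$ are cocycle conjugate, and hence that the two crossed products are isomorphic.

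The main obstacle is verifying the precise hypotheses of the chosen classification theorem for our specific $\rho_i$; the technical heart is almost certainly to establish the Rokhlin property (or an appropriate form of approximate innerness) for each $\rho_i$. The explicit form $U_i=\sum_{u\in\bm^a}s_{e_u}s_{f_{\jmath(u)}}^*$ together with the AF filtration $\{\fF_k\}_{k\ge 1}$ of $\fF$ should furnish the key tools for constructing approximate Rokhlin towers inside the finite-dimensional subalgebras $\fF_k$. As a fallback, one could compute via Pimsner--Voiculescu that $K_0=K_1=\bZ[1/(mn)]$ for both crossed products (since $1-(\rho_i)_*=0$ on $K_0(\fF)$ and $K_1(\fF)=0$) and then invoke an Elliott-type classification of the resulting simple AT-algebras with this common invariant.
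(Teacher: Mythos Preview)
The paper offers no proof beyond the sentence ``It follows from Theorem~\ref{T:crossprod} that\ldots''.  In other words, the author regards the corollary as immediate once one knows $\O_\theta^\sigma\cong\fF\times_\rho\bZ$, presumably because $\fF$ is the $(mn)^\infty$-UHF algebra regardless of $\theta$.  No argument is given for why the crossed products $\fF\times_{\rho_1}\bZ$ and $\fF\times_{\rho_2}\bZ$ must coincide when $\rho_1,\rho_2$ arise from different permutations $\theta_1,\theta_2$.

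Your proposal therefore goes considerably beyond what the paper does, and you are right to notice that an extra ingredient is needed: Theorem~\ref{T:crossprod} only identifies $\O_\theta^\sigma$ with \emph{some} crossed product of $\fF$ by an aperiodic automorphism, and different aperiodic automorphisms of a UHF algebra can yield non-isomorphic crossed products.  Your two suggested routes---Kishimoto's cocycle-conjugacy theorem for automorphisms of UHF algebras with the Rokhlin property, or a $K$-theory computation via Pimsner--Voiculescu followed by classification---are both reasonable strategies, and the $K$-theoretic observation that $(\rho_i)_*=\id$ on $K_0(\fF)$ is correct.  The honest assessment is that you have identified a genuine gap in the paper's treatment and sketched a plausible repair, though (as you acknowledge) the verification of the Rokhlin property for the specific $\rho_i$, or the appeal to a suitable classification theorem for the resulting simple algebras, would still need to be carried out in full.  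It is quite possible the author had in mind exactly the kind of classification input you describe but chose to suppress it; in any case, the paper itself supplies nothing further.
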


\section{KMS-States of $\O_\theta$}
\label{S:KMS}

In this section, we shall analyze the $\sigma$-KMS states for the C*-dynamical system $(\O_\theta, \sigma)$.
Using the properties of the identity permutation (i.e., $\theta=\id$), we first show that $\omega$
is the unique $\sigma$-KMS state of $\O_\theta$. However, in the general
case, we have to further assume that $\O_\theta^\sigma$ has a unique tracial state in order
to conclude the uniqueness of $\sigma$-KMS states. But from the structure of $\O_\theta^\sigma$
obtained in Section \ref{S:cropro},
it would be possible to prove that our assumption would be redundant (cf. \cite{Bedos1, Bedos2, Thom}).
Unfortunately, I am not able to prove this so far.

We first recall the definition of KMS-states for C*-dynamical systems from \cite[Chapter 5]{BraRob}.
Let $(\fA, \varrho)$ be a C*-dynamical system. The state $\varphi$ over $\fA$ is said to be a
\textit{$\varrho$-KMS state  at value $\beta\in\bR$}, or a \textit{$(\varrho, \beta)$-KMS state}, if
\begin{align}
\label{E:KMS}
\varphi(AB)=\varphi(B\varrho_{i\beta}(A))
\end{align}
for all $A,B$ entire for $\rho$.
A $\varrho$-KMS state at value $\beta=-1$ is simply called a \textit{$\varrho$-KMS state}.

For the C*-dynamical system $(\O_\theta, \sigma)$,
the following lemma says that $(\O_\theta, \sigma)$ has only $\sigma$-KMS states.

\begin{lem}
\label{L:beta-1}
If $\varphi$ is a $(\sigma, \beta)$-KMS state over $\O_\theta$, then $\beta=-1$.
So every KMS-state over $\O_\theta$ for $\sigma$ is a $\sigma$-KMS state.
\end{lem}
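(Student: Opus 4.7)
The plan is to exploit the KMS condition against the simplest possible pair of generators, namely the isometries $s_{e_i}$ and their adjoints, and then use the defect-free property to collapse the resulting identity to a single equation in $\beta$.

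First I would note that each generator $s_{e_i}$ is an eigenvector of the modular flow, hence entire for $\sigma$, with
\[
\sigma_z(s_{e_i}) = m^{-iz} s_{e_i} \qquad (z\in\bC).
\]
In particular $\sigma_{i\beta}(s_{e_i}) = m^{\beta} s_{e_i}$. Taking $A = s_{e_i}$ and $B = s_{e_i}^*$ in the KMS identity \eqref{E:KMS} and using $s_{e_i}^* s_{e_i} = I$, I would obtain
\[
\varphi(s_{e_i} s_{e_i}^*) = \varphi\bigl(s_{e_i}^*\sigma_{i\beta}(s_{e_i})\bigr) = m^{\beta}\varphi(s_{e_i}^* s_{e_i}) = m^{\beta}.
\]

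Next, summing over $i$ and invoking the defect-free relation $\sum_{i=1}^m s_{e_i}s_{e_i}^* = I$ together with $\varphi(I) = 1$, I would get
\[
1 = \sum_{i=1}^m \varphi(s_{e_i} s_{e_i}^*) = m\cdot m^{\beta} = m^{\beta+1},
\]
which forces $\beta = -1$. (As a sanity check one could repeat the argument with $s_{f_j}$ in place of $s_{e_i}$ to obtain $n^{\beta+1} = 1$, which is consistent.)

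There is no real obstacle here; the only thing to verify carefully is that the generators $s_{e_i}, s_{e_i}^*$ lie in the space of $\sigma$-entire elements so that the KMS identity in the form \eqref{E:KMS} is literally applicable, which is immediate from the fact that they are eigenvectors of $\sigma_t$ with eigenvalue $m^{\mp it}$ that extends to an entire function of $t\in\bC$.
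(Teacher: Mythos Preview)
Your argument is correct and entirely self-contained. The paper's proof takes a different, less direct route: it observes that $\sigma$ restricts to a rescaled gauge action on the Cuntz subalgebra $\O_m=\ca(s_{e_1},\dots,s_{e_m})$, so $\varphi|_{\O_m}$ is a $(\tilde\gamma,\beta)$-KMS state there, and then invokes the Olesen--Pedersen result on uniqueness of KMS states for the gauge action on Cuntz algebras to conclude $\beta=-1$.

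Your approach is essentially the computation that underlies that cited result, carried out in one line using the defect-free relation; it is more elementary and avoids appealing to an external theorem. The paper's approach is shorter to write if one is willing to quote \cite{OlePed}, and it makes explicit the conceptual point that the phenomenon is inherited from the embedded Cuntz algebra. One small thing worth stating explicitly in your version: the implication $m^{\beta+1}=1\Rightarrow\beta=-1$ uses $m>1$, which holds under the paper's standing aperiodicity assumption.
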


\begin{proof}
Notice that the restriction of $\sigma$ to the Cuntz algebra $\O_m=\ca(s_{e_1},...,s_{e_m})$,
say $\tilde\gamma$,
is nothing but a rescaling of the usual gauge action
of $\O_m$.
Since $\varphi$ is a $\sigma$-KMS state over $\O_\theta$ at value $\beta$,
its restriction to the Cuntz algebra $\O_m$
is a $(\tilde\gamma, \beta)$-KMS state over $\O_m$.
From the well-known uniqueness of KMS states of Cuntz algebras
of gauge actions, we have $\beta=-1$ (cf. \cite{OlePed}).
\end{proof}

In \cite{Yang}, it is shown that $\omega$ is the unique $\sigma$-KMS state over $\O_\theta$
when $\frac{\ln m}{\ln n}\notin\bQ$. The following result says this is also the case even if
$\frac{\ln m}{\ln n}\in\bQ$ when $\theta=\id$.

\begin{prop}
\label{P:uni_id}
$\omega$ is the unique $\sigma$-KMS state over $\O_\id$.
\end{prop}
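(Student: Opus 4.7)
The plan is to exploit the observation that, when $\theta=\id$, the commutation $e_if_j=f_je_i$ upgrades to a full commutation between the blue and red isometries and their adjoints. Inserting $\sum_{i'}s_{e_{i'}}s_{e_{i'}}^*=I$ and using $s_{f_j}s_{e_{i'}}=s_{e_{i'}}s_{f_j}$ gives
\[
s_{e_i}^*s_{f_j}=\sum_{i'}s_{e_i}^*s_{f_j}s_{e_{i'}}s_{e_{i'}}^*=\sum_{i'}s_{e_i}^*s_{e_{i'}}s_{f_j}s_{e_{i'}}^*=s_{f_j}s_{e_i}^*.
\]
Consequently, every standard generator admits the factorization
\[
s_us_v^*=\bigl(s_{e_\alpha}s_{e_\gamma}^*\bigr)\bigl(s_{f_\beta}s_{f_\delta}^*\bigr)=:AB,
\]
with $d(u)=(|\alpha|,|\beta|)$, $d(v)=(|\gamma|,|\delta|)$, and the two factors $A\in\ca(s_{e_1},\dots,s_{e_m})$ and $B\in\ca(s_{f_1},\dots,s_{f_n})$ commute. (This is just the identification $\O_\id\cong\O_m\otimes\O_n$ under which $\sigma$ splits as a product action, but I prefer the element-wise statement for the argument below.)

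Let $\varphi$ be any $\sigma$-KMS state on $\O_\id$. I first argue that $\varphi$ is supported on the core $\fF$. Since $A$ is an $\sigma$-eigenvector with $\sigma_{-i}(A)=m^{|\gamma|-|\alpha|}A$, the KMS condition together with $AB=BA$ yields
\[
\varphi(AB)=\varphi\bigl(B\sigma_{-i}(A)\bigr)=m^{|\gamma|-|\alpha|}\varphi(BA)=m^{|\gamma|-|\alpha|}\varphi(AB),
\]
forcing $\varphi(AB)=0$ whenever $|\alpha|\ne|\gamma|$. Interchanging the roles of $A$ and $B$ in the KMS condition and using $\sigma_{-i}(B)=n^{|\delta|-|\beta|}B$ forces $\varphi(AB)=0$ whenever $|\beta|\ne|\delta|$. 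Hence $\varphi(s_us_v^*)=0$ unless $d(u)=d(v)$, i.e.\ $\varphi=\varphi\circ\Phi$.

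Now $\sigma$ is trivial on $\fF$, so the KMS relation restricted to $\fF$ degenerates to the tracial identity $\varphi(AB)=\varphi(BA)$. Since $\fF$ is the UHF algebra of type $(mn)^\infty$, it admits a unique tracial state, namely $\tau$, and consequently $\varphi|_\fF=\tau$. Combining, for every $X\in\O_\id$,
\[
\varphi(X)=\varphi(\Phi(X))=\tau(\Phi(X))=\omega(X),
\]
which gives $\varphi=\omega$.

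The delicate step, and the one that is genuinely special to $\theta=\id$, is the vanishing argument. For a general permutation $\theta$, the commuting factorization $s_us_v^*=AB$ is unavailable, and the KMS condition only forces $\varphi(s_us_v^*)=0$ on pairs with $\bn^{d(v)-d(u)}\ne 1$, so that the support of $\varphi$ can only be reduced to the larger algebra $\O_\theta^\sigma$ (and not all the way to $\fF$). This is precisely why the general proof requires the additional hypothesis that $\O_\theta^\sigma$ carries a unique tracial state, while in the $\id$ case one collapses directly to the UHF core and uniqueness is automatic.
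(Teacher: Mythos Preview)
Your proof is correct and follows essentially the same strategy as the paper: use the commutation between the blue and red families (special to $\theta=\id$) together with the KMS identity to force $\varphi$ to vanish off the core $\fF$, then invoke the uniqueness of the trace on the UHF algebra. Your execution is in fact a bit more streamlined: by first writing $s_us_v^*=AB$ with $A\in\ca(s_{e_i})$, $B\in\ca(s_{f_j})$ commuting, you get the vanishing in one stroke, whereas the paper carries out a short case analysis on generators of the form $s_{f_v}^{\pm 1}s_{e_u}^{\pm 1}$ before reaching the same conclusion.
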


\begin{proof}
Suppose that $\varphi$ is another $\sigma$-KMS state on $\O_\id$.
From the KMS condition \eqref{E:KMS},
we have $\varphi|_\fF=\omega|_\fF=\tau$, the unique tracial state on $\fF$.
To show $\varphi=\omega$,
it suffices to verify that $\varphi$ vanishes on all standard generators of $\O_\theta$,
which are not in $\fF$.

Since $\theta=\id$, $s_{e_i}$'s and $s_{e_i}^*$'s commute with $s_{f_j}$'s and $s_{f_j}^*$'s
(\cite{DYperiod}). This simple observation will be used frequently in the sequel without any further mention.
Now let $X:=s_{e_{u_1}f_{v_1}} s_{e_{u_2}f_{v_2}}^*\not\in\fF$.
From the KMS condition \eqref{E:KMS}, we obtain
\begin{align}
\nonumber
\varphi(X)
&=
\varphi(s_{e_{u_2}f_{v_2}}^*\sigma_{-i}(s_{e_{u_1}f_{v_1}})) \\
\nonumber
&=
m^{-|u_1|}n^{-|v_1|}\varphi(s_{f_{v_2}}^*s_{e_{u_2}}^*s_{e_{u_1}}s_{f_{v_1}})\\
\label{E:varphi}
&=
\begin{cases}
m^{-|u_1|}n^{-|v_1|}\varphi(s_{e_{u'}}^*s_{f_{v_2}}^*s_{f_{v_1}}), & \ \text{if}\  u_2=u_1u', \\
m^{-|u_1|}n^{-|v_1|}\varphi(s_{f_{v_2}}^*s_{f_{v_1}}s_{e_{u''}}), &\ \text{if}\  u_1=u_2 u'',\\
0, &\ \text{otherwise}.
\end{cases}
\end{align}
From \eqref{E:varphi}, in order to show $\varphi(X)=0$, it is sufficient to prove that
$\varphi(s_{f_v}^*s_{e_u})=0$, $\varphi(s_{f_v}s_{e_u})=0$, $\varphi(s_{f_v}s_{e_u}^*)=0$,
and $\varphi(s_{f_v}^*s_{e_u}^*)=0$
for all $u,v\in\Fth$, where at least one of $u,v$ with length nonzero. 
For convenience, set $s_{e_u}^{1}:=s_{e_u}$,   $s_{f_v}^{1}:=s_{f_v}$, $s_{e_u}^{-1}:=s_{e_u}^*$,
and $s_{f_v}^{-1}:=s_{f_v}^*$.

(i)
Neither of $|u|, |v|$ is 0. Then it follows from the KMS-condition that 
\begin{align*}
\varphi(s_{f_v}^{\mp 1}s_{e_u}^{\pm 1})
&=\varphi(s_{e_u}^{\pm 1}\sigma_{-i}(s_{f_v}^{\mp 1}))\ (\text{by}\ \eqref{E:KMS})\\
&=n^{\pm |v|}\varphi(s_{e_u}^{\pm 1}s_{f_v}^{\mp 1})\ (\text{by}\ \eqref{E:sigma}) \\
&=n^{\pm |v|}\varphi(s_{f_v}^{\mp 1}s_{e_u}^{\pm 1})\ (\text{as}\ \theta=\id) .
\end{align*}
This forces $\varphi(s_{f_v}^{\mp 1}s_{e_u}^{\pm 1})=0$, as $|v|\ne 0$.

\smallskip

(ii) One of $|u|, |v|$ is 0.
Without loss of generality, we assume that $|v|=0$.
Since $|u|\ne 0$ from our assumption, the following calculations give $\varphi(s_{e_u}^{\pm 1})=0$:
\begin{align*}
\varphi(s_{e_u}^{\pm 1})
&=\sum_{j=1}^n\varphi(s_{e_u}^{\pm 1}s_{f_j}s_{f_j}^*) \ (\text{by the defect free property})\\
&=\sum_{j=1}^n\varphi(s_{f_j}^*\sigma_{-i}(s_{e_u}^{\pm 1}s_{f_j})) \ (\text{by}\ \eqref{E:KMS}) \\
&=\sum_{j=1}^n{m^{\mp |u|}n^{-1}}\varphi(s_{f_j}^*s_{e_u}^{\pm 1}s_{f_j}) \ (\text{by}\ \eqref{E:sigma}) \\
&=\sum_{j=1}^n{m^{\mp |u|}n^{-1}}\varphi(s_{f_j}^*s_{f_j}s_{e_u}^{\pm 1}) \ (\text{as}\ \theta=\id) \\
&=m^{\mp |u|}\varphi(s_{e_u}^{\pm 1}).
\end{align*}

Therefore, $\varphi(X)=0$ for any generator $X\notin\fF$, and so $\varphi=\omega$.
\end{proof}

It immediately follows from \cite[Theorem 5.3.30]{BraRob} that

\begin{cor}
\label{C:idfactor}
$\pi(\O_\id)''$ is a factor.
\end{cor}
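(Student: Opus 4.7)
The plan is to deduce the corollary as an immediate consequence of Proposition \ref{P:uni_id} together with the standard structure theorem for KMS states cited as \cite[Theorem 5.3.30]{BraRob}. That theorem states that the set of $(\sigma,\beta)$-KMS states on a C*-algebra is a (Choquet) simplex, and that a KMS state $\varphi$ is an extreme point of this simplex if and only if its GNS von Neumann algebra $\pi_\varphi(\O_\id)''$ is a factor.

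First I would invoke Proposition \ref{P:uni_id} to assert that $\omega$ is the unique $\sigma$-KMS state over $\O_\id$ (noting implicitly via Lemma \ref{L:beta-1} that all KMS states for $\sigma$ sit at value $\beta=-1$, so uniqueness is genuine uniqueness within the full KMS simplex). A singleton simplex consists of a single extreme point, so $\omega$ is trivially extremal in the KMS simplex. Then the factor criterion from \cite[Theorem 5.3.30]{BraRob} applies directly, yielding that $\pi_\omega(\O_\id)''$ is a factor, which is exactly the corollary.

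There is no real obstacle here: everything reduces to the uniqueness established in Proposition \ref{P:uni_id} plus a direct citation. The only point worth stating carefully is the equivalence between extremality of $\omega$ in the KMS simplex and factoriality of $\pi_\omega(\O_\id)''$, so the write-up should consist of one sentence recalling uniqueness, one sentence noting that uniqueness forces extremality, and one sentence applying \cite[Theorem 5.3.30]{BraRob} to conclude factoriality.
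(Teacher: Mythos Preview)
Your proposal is correct and matches the paper's own argument exactly: the paper simply states that the corollary follows immediately from \cite[Theorem 5.3.30]{BraRob}, using the uniqueness of $\omega$ as a $\sigma$-KMS state established in Proposition~\ref{P:uni_id}. Your additional remark that uniqueness forces extremality in the KMS simplex just makes explicit the step the paper leaves implicit.
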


As we have observed, the propety $\theta=\id$ plays an important role in the the proof of Proposition \ref{P:uni_id}.
So the approach there can not be applied to the general case.
It is worth noticing that Corollary \ref{C:idfactor} can also follow from Proposition \ref{P:AFDid} below, which actually gives more.

\medskip

Now returning to the general case,
we have to assume that $\O_\theta^\sigma$ has a unique tracial state.
Then this unique tracial sate is nothing but $\tau\circ \Psi$,
where  $\Psi$ is the canonical faithful expectation from $\O_\theta^\sigma$ onto $\fF$, and $\tau$ is the
(unique) tracial state on $\fF$. Furthermore, simple calculations (on generators)
show that $\tau\circ \Psi=\omega|_{\O_\theta^\sigma}$.

\begin{prop}
\label{P:uni_Q}
Suppose that $\O_\theta^\sigma$ has a unique tracial state.
Then $\omega$ is the unique $\sigma$-KMS state over $\O_\theta$.
\end{prop}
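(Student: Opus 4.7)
The strategy is to reduce uniqueness of the $\sigma$-KMS state to uniqueness of the tracial state on $\O_\theta^\sigma$, by means of a canonical conditional expectation $E:\O_\theta\to\O_\theta^\sigma$ that every $\sigma$-KMS state preserves.

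First I would observe, by comparing \eqref{E:sigma} with the formula $\gamma_\bt(s_us_v^*)=\bt^{d(u)-d(v)}s_us_v^*$ for the gauge action, that $\sigma_t=\gamma_{(m^{-it},n^{-it})}$ for every $t\in\bR$; hence $\sigma$ is a one-parameter subgroup of the compact gauge action. Let $G\subseteq\bT^2$ denote the closure of $\{(m^{-it},n^{-it}):t\in\bR\}$. Then $G$ is a compact group (equal to $\bT^2$ in the irrational case, and to a one-dimensional subtorus in the rational case $m^a=n^b$), and point-norm continuity of $\gamma$ yields $\O_\theta^G=\O_\theta^\sigma$. Integrating against the normalized Haar measure on $G$ produces a faithful conditional expectation
$$E(X)=\int_G\gamma_g(X)\,dg,\qquad E:\O_\theta\to\O_\theta^\sigma.$$

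Next, let $\varphi$ be any $\sigma$-KMS state. Two standard consequences of the KMS condition \eqref{E:KMS} apply: (i) $\varphi$ is $\sigma$-invariant, so by continuity $G$-invariant, and hence $\varphi\circ E=\varphi$; and (ii) $\varphi|_{\O_\theta^\sigma}$ is a tracial state, because every $A\in\O_\theta^\sigma$ is $\sigma$-entire with $\sigma_z(A)=A$ for all $z\in\bC$, giving $\varphi(AB)=\varphi(B\sigma_{-i}(A))=\varphi(BA)$ first for entire $B$ and then, by norm density of entire elements, for all $B\in\O_\theta^\sigma$. The same considerations show $\omega\circ E=\omega$ and that $\omega|_{\O_\theta^\sigma}$ is tracial. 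By the hypothesis that $\O_\theta^\sigma$ admits a unique tracial state, we get $\varphi|_{\O_\theta^\sigma}=\omega|_{\O_\theta^\sigma}$, whence
$$\varphi(X)=\varphi(E(X))=\omega(E(X))=\omega(X)\qfor X\in\O_\theta,$$
and therefore $\varphi=\omega$.

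The only delicate step is the identification $\O_\theta^G=\O_\theta^\sigma$ in the rational case, which requires nothing more than point-norm continuity of $\gamma$ to upgrade $\sigma$-invariance (fixedness under the dense one-parameter subgroup) to full $G$-invariance. Everything else is a direct combination of the KMS formalism with the trace-uniqueness hypothesis, so I expect the hardest conceptual content to lie in verifying the hypothesis itself (which is the point of Theorem \ref{T:crossprod} and Remark \ref{R:assump}) rather than in the deduction made here.
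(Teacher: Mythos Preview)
Your proof is correct and follows essentially the same approach as the paper: build a conditional expectation $E:\O_\theta\to\O_\theta^\sigma$ by averaging over the (closure of the) one-parameter subgroup of the gauge action, use $\sigma$-invariance of KMS states to get $\varphi=\varphi\circ E$, and then invoke the tracial uniqueness hypothesis on $\O_\theta^\sigma$. The only cosmetic difference is that the paper dispatches the irrational case by citing \cite{Yang} and treats only the rational case explicitly (where $G\cong\bT$), whereas you handle both at once via the compact closure $G\subseteq\bT^2$; you are also a bit more explicit than the paper in verifying from the KMS condition that $\varphi|_{\O_\theta^\sigma}$ is tracial.
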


\begin{proof}
From \cite{Yang}, it suffices to show that $\omega$ is the unique $\sigma$-KMS state over $\O_\theta$ when
$\frac{\ln m}{\ln n}\in\bQ$.

We now assume $\frac{\ln m}{\ln n}\in\bQ$. As before, let $a,b\in \bN$ with $\gcd(a,b)=1$ satisfy $m^a=n^b$.
Suppose that $\varphi$ is also a $\sigma$-KMS state over $\O_\theta$.
Notice that there is an obvious relation between $\sigma$ and the gauge action $\gamma$:
for all $t\in \bR$, $\sigma_t=\gamma_{(m^{-it}, n^{-it})}=\gamma_{(z^b,z^a)}$, where $z=\exp(-i\frac{t}{b})$.
Thus $\sigma$ gives an action of the torus $\bT$ on $\O_\theta$.
As in or \cite[Section 4.5]{BO} or \cite{DYperiod}, one can show that there is a faithful conditional expectation
$E$ from $\O_\theta$ onto $\O_\theta^\sigma$: $E(X)=\int_\bT\, \gamma_{(z^b,z^a)}(X)\, dz$ for all $X\in \O_\theta$. 
On one hand, from our hypothesis, it follows that $\phi\circ E=\tau\circ \Psi\circ E=\omega\circ E$,
where $\tau\circ\Psi$ is the unique tracial state of $\O_\theta^\sigma$ (see the remark preceding the statement of this theorem). 
On the other hand, since both $\phi$ and $\omega$ are $\sigma$-KMS states, they both are invariant with respect to $\sigma$.
So from the relation between $\sigma$ and $\gamma$ indicated as above, we infer that  
$\phi=\phi\circ E$ and $\omega\circ E=\omega$. Therefore, $\sigma=\omega$, proving the uniqueness of $\omega$.
\end{proof}

Some remarks are in order.
\begin{rem}
By Corollary \ref{C:indep}, our assumption is equivalent to requiring that $\O_\id^\sigma$ has a unique tracial state, namely,
$\tau$ has a unique extension from $\fF$ to $\O_\theta^\sigma$.
\end{rem}

\begin{rem}
\label{R:assump}

Our assumption seems to be quite reasonable although it looks strong.

\begin{itemize}
\item
First of all, if $\frac{\ln m}{\ln n}\notin\bQ$, the above assumption holds true automatically since $\O_\theta^\sigma=\fF$ in this case.
\item
Secondly, since $\O_\theta^\sigma\cong\fF\times_\rho \bZ$ (cf. Theorem \ref{T:crossprod}),
there are several known characterizations of
$\O_\theta^\sigma$ having a unique tracial state.

$\O_\theta^\sigma$ has a unique tracial state$\Longleftrightarrow$
$\rho$ is uniformly outer $\Longleftrightarrow$ $\rho$ is $\tau$-properly outer $\Longleftrightarrow$
$\omega|_{\O_\theta^\sigma}$ is a factor state $\Longleftrightarrow$ $\rho$ has the Rohlin property $\Longleftrightarrow$ $\O_\theta^\sigma$
is of real rank zero $\Longleftrightarrow$ the Connes spectrum $\Gamma(\tilde \rho)$ of $\tilde \rho$ is equal to $\bT$,
where $\tilde \rho$ is the automorphism of $\pi_\omega(\fF)$ extended from $\rho$. (Notice that $\Gamma(\rho)=\bT$.)
For more details about those equivalences, refer to, e.g.,  \cite{Bedos2, Krish2, Thom}.
\item
Thirdly, if $\pi_\omega(\fF)'\cap\pi_\omega(\O_\theta^\sigma)''=\bC 1$, then
$\pi_\omega(\O_\theta^\sigma)''\cong \pi_\omega(\fF)''\times_{\tilde \rho} \bZ$ is a factor, and so
$\O_\theta^\sigma$ has a unique tracial state from above (cf. \cite[Chapter XI]{Take}).
\end{itemize}
\end{rem}

We are now able to obtain the following promised characterizations.
\begin{thm}
\label{T:factor}
Suppose that $\O_\theta^\sigma$ has a unique tracial state. Then
the following statements are equivalent:
\begin{itemize}
\item[{(i)}]
$\Fth$ is aperiodic;
\item[{(ii)}]
$\pi(\O_\theta)''$ is a factor;
\item[{(iii)}]
$\omega$ is the unique $\sigma$-KMS state over $\O_\theta$.
\end{itemize}
\end{thm}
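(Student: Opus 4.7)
The plan is to prove the cyclic chain (i)~$\Rightarrow$~(iii)~$\Rightarrow$~(ii)~$\Rightarrow$~(i); two of the three arrows are essentially packaged in earlier results, while the last one is where external structural input is needed.

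For (i)~$\Rightarrow$~(iii), I would simply invoke Proposition~\ref{P:uni_Q}: under the hypothesis that $\O_\theta^\sigma$ admits a unique tracial state, aperiodicity of $\Fth$ is exactly the condition that places us in its scope, yielding $\omega$ as the unique $\sigma$-KMS state. For (iii)~$\Rightarrow$~(ii), I would appeal to Theorem~5.3.30 of \cite{BraRob} together with Lemma~\ref{L:beta-1}: the latter pins every $\sigma$-KMS state to value $\beta = -1$, and the former asserts that within the Choquet simplex of $(\sigma,-1)$-KMS states, a state has factorial GNS representation precisely when it is extremal. Uniqueness forces $\omega$ to be trivially extremal, so $\pi_\omega(\O_\theta)''$ is a factor.

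For (ii)~$\Rightarrow$~(i), I would argue the contrapositive. Suppose $\Fth$ is periodic; by the structural work in \cite{DYperiod}, $\O_\theta$ has nontrivial center, so one can pick $Z \in Z(\O_\theta)$ that is not a scalar multiple of $I$. Faithfulness of $\omega$ makes $\pi_\omega$ injective, whence $\pi_\omega(Z) \in \pi_\omega(\O_\theta) \subseteq \pi_\omega(\O_\theta)''$ is not a scalar. Since $Z$ commutes with every element of $\O_\theta$, a standard weak-operator continuity argument (approximate any $M \in \pi_\omega(\O_\theta)''$ by a net in $\pi_\omega(\O_\theta)$ and note that left/right multiplication by $\pi_\omega(Z)$ is \textsc{wot}-continuous) shows that $\pi_\omega(Z)$ also commutes with every element of $\pi_\omega(\O_\theta)''$, so it lies in the center of that von Neumann algebra. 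Thus $\pi_\omega(\O_\theta)''$ is not a factor, contradicting (ii).

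The main obstacle is precisely this last step: it requires importing from \cite{DYperiod} the fact that periodic $\Fth$ produces a non-scalar \emph{central} element of $\O_\theta$, rather than merely a non-scalar element of the relative commutant $\fF' \cap \O_\theta$. Note that under the standing aperiodicity convention adopted from Section~\ref{S:Pre} onward, (i) is automatic and this arrow is vacuous; in that reading the substantive content of the theorem is really the cycle (i) + the unique-trace assumption~$\Rightarrow$~(iii)~$\Rightarrow$~(ii), carried out as in the two preceding paragraphs.
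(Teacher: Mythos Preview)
Your proof is correct and follows exactly the same cyclic route as the paper: (i)~$\Rightarrow$~(iii) via Proposition~\ref{P:uni_Q}, (iii)~$\Rightarrow$~(ii) via \cite[Theorem~5.3.30]{BraRob}, and (ii)~$\Rightarrow$~(i) via \cite{DYperiod}. Your added detail for the last arrow (pushing a non-scalar central element of $\O_\theta$ into the center of $\pi_\omega(\O_\theta)''$ by \textsc{wot}-continuity) and your observation about the standing aperiodicity convention are sound elaborations that the paper leaves implicit.
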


\begin{proof}
(i) $\Rightarrow$ (iii) is from Proposition \ref{P:uni_Q}; (iii) $\Rightarrow$ (ii) is from \cite[Theorem 5.3.30]{BraRob};
and (ii) $\Rightarrow$ (i) is from \cite{DYperiod}.
\end{proof}

\section{Types of $\pi(\O_\theta)''$}
\label{S:type}

In this section, using the results in Section \ref{S:KMS},
we will prove that,
under the assumption that $\O_\theta^\sigma$ has a unique tracial state,
there is a dichotomy for the type of the factor $\pi(\O_\theta)''$:
it is of either type III$_1$, or type III$_\lambda$ ($0<\lambda<1$).
This is dependent of whether the ratio $\frac{\ln m}{\ln n}$ being rational or not,
where $m$ (resp. $n$) is the number of $e$'s (resp. $f$'s) in the two
types of generators of the rank 2 graph $\Fth$.
When $\theta=\id$, it turns out that our assumption is redundant. 
In order to achieve this, we need to use a different
approach, which takes advantage of some known
properties of Cuntz algebras.

Before giving our main results in this section, we need to introduce more notation.
Let $\fM$ be a von Neumann algebra and $\phi$ a normal state of $\fM$.
By $\{\sigma_t^\phi:t\in \bR\}$, we denote the modular automorphism group of $\phi$.
Then the \textit{Connes invariant} $T(\fM)$ is defined as
$$T(\fM):=\{t\in\bR: \sigma_t^\phi\ \text{is inner}\},$$
and its orthogonal complement $\Gamma (\fM)$ is given by
$$\Gamma (\fM)=\{\lambda \in\bR: \sigma_t^\phi\ \text{is inner} \Rightarrow e^{i\lambda t}=1\}.$$
If $\fM$ is $\sigma$-finite, then the \textit{Connes spectrum $\S(\fM)$} is the intersection over all faithful
normal states of the spectra of their corresponding modular operators \cite{BraRob}.
Then one has the following relation:
$$\Gamma (\fM)=\ln(\S(\fM)\setminus \{0\}).$$
Connes classified type III factors as follows.
A type III factor $\fM$ is said to be of
\begin{alignat*}{3}
type\ III_0 \quad \text{if} & \quad \S(\fM)=\{0, 1\};\\
type\ III_\lambda \quad \text{if} &\quad  \S(\fM)=\{0, \lambda^n:n\in \bZ\} \quad (0<\lambda<1);\\
type\ III_1 \quad \text{if} &\quad  \S(\fM)=\{0\}\cup\bR^+.
\end{alignat*}
See, e.g., \cite{BraRob, KadRin, Take} for more information.

\subsection{The general case}
\label{SS:general}
We are now able to determine the types of $\pi(\O_\theta)''$.

\begin{thm}
\label{T:apefactor}
Let $\Fth$ be an aperiodic 2-graph.
\begin{itemize}
\item[(i)]
If $\frac{\ln m}{\ln n}\not\in\bQ$, then $\pi(\O_\theta)''$ is an AFD factor of type III$_1$.
\item [(ii)]
If $\frac{\ln m}{\ln n}\in\bQ$ and $\O_\theta^\sigma$ has a unique tracial state, then $\pi(\O_\theta)''$ is an AFD factor of type III$_{m^{-\frac{1}{b}}}$ (or III$_{n^{-\frac{1}{a}}}$), where
$a,b\in\bN$ satisfy $m^a=n^b$ and $\gcd(a,b)=1$.
\end{itemize}
\end{thm}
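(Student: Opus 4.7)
The plan is to take part (i) as already established in \cite{Yang} and focus on part (ii). Under the hypothesis $\frac{\ln m}{\ln n}\in\bQ$ and unique tracial state on $\O_\theta^\sigma$, Theorem \ref{T:factor} immediately gives that $\fM := \pi_\omega(\O_\theta)''$ is a factor and that $\omega$ is the unique $\sigma$-KMS state. I would first identify the fixed point algebra $\fM^\sigma$ of the extended modular action with $\pi_\omega(\O_\theta^\sigma)''$, using the faithful conditional expectation $E$ from the proof of Proposition \ref{P:uni_Q}. Since $\omega|_{\O_\theta^\sigma}$ is tracial and, by hypothesis, the unique tracial state of $\O_\theta^\sigma$, a standard GNS central-projection argument then forces $\fM^\sigma$ to be a finite factor.

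The core of the proof is a computation of the Connes spectrum $\S(\fM)$. By Connes' theorem, once $\fM^\sigma$ is a factor one has $\Gamma(\sigma) = \mathrm{Sp}(\sigma)$, the Arveson spectrum of $\sigma$. Formula \eqref{E:sigma} shows that every standard generator $s_u s_v^*$ is a $\sigma$-eigenvector with character $t\mapsto e^{it(x\ln m + y\ln n)}$, where $(x,y) = d(v)-d(u)\in\bZ^2$; hence $\mathrm{Sp}(\sigma)$ is the closure of the additive subgroup $\bZ\ln m + \bZ\ln n$ of $\bR$. Now $m^a = n^b$ gives $a\ln m = b\ln n$, and a Bezout choice $pa+qb=1$ yields $\frac{\ln m}{b} = p\ln n + q\ln m$, so $\bZ\ln m + \bZ\ln n = \frac{\ln m}{b}\bZ = \frac{\ln n}{a}\bZ$, which is already closed. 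Exponentiating, $\S(\fM)\setminus\{0\} = \{\lambda^k : k\in\bZ\}$ with $\lambda = m^{-1/b} = n^{-1/a}$, so $\fM$ is of type $\mathrm{III}_\lambda$. For the AFD assertion, nuclearity of the rank $2$ graph C*-algebra $\O_\theta$ forces $\fM$ to be injective, and Connes' theorem on injective factors delivers hyperfiniteness. Part (i) fits the same template: when $\frac{\ln m}{\ln n}\notin\bQ$, the subgroup $\bZ\ln m+\bZ\ln n$ is dense in $\bR$, so $\Gamma(\sigma)=\bR$ and the factor is of type $\mathrm{III}_1$.

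The main obstacle I anticipate is twofold. First, the identification $\fM^\sigma = \pi_\omega(\O_\theta^\sigma)''$ needs care beyond the obvious containment $\supseteq$; I expect this to follow from normal extension of $\sigma$ to $\fM$ combined with the torus-averaging expectation $E$. Second, invoking the precise form of Connes' theorem ($\Gamma(\sigma)=\mathrm{Sp}(\sigma)$ when $\fM^\sigma$ is a factor) requires a careful reference, for instance to \cite{Take} or \cite{BraRob}. Both points are standard, but the precise citations must be pinned down so that the spectral computation rigorously delivers the claimed type.
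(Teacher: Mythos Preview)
Your proposal is correct and follows essentially the same route as the paper: invoke Theorem~\ref{T:factor} for factoriality, observe that the unique tracial state on $\O_\theta^\sigma$ makes the GNS closure of the fixed point algebra a factor, then apply the Connes--Str\u{a}til\u{a} result that in this situation the Connes spectrum coincides with the full spectrum, and finally compute that spectrum from the eigenvector formula~\eqref{E:sigma}. The only cosmetic difference is that the paper phrases the key step multiplicatively via $\mathrm{Sp}(\Delta_\omega)$ (citing \cite[\S28.3]{Str}) whereas you work additively with the Arveson spectrum $\mathrm{Sp}(\sigma)$ and exponentiate at the end; the lattice computation and the Bezout step are the same in either picture.
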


\begin{proof}
(i): It is proved in \cite{Yang}. (It is listed here just for completeness.)

(ii): By Theorem \ref{T:factor}, $\pi(\O_\theta)''$ is a factor. It is AFD since $\O_\theta$ is amenable (\cite{KumPask}).

Since $\O_\theta^\sigma$ has a unique tracial state, 
$\pi_\omega(\O_\theta^{\sigma})''$ is a factor. It follows
from \cite[Section 28.3]{Str} that the Connes spectrum coincides
with the spectrum of the modular operator of $\omega$. That is, $\S(\O_\theta'')=\rm{Sp}(\Delta_\omega)$.
Recall that from \cite{Yang} the modular operator $\Delta_\omega$ associated with $\omega$ is given by
$$
\Delta_\omega(s_us_v^*)=\bn^{d(v)-d(u)}s_us_v^*.
$$
Let $(a,b)\in\bN^2$ be such that $m^a=n^b$ and $\gcd(a,b)=1$.
Then it is  now easy to see that
\begin{align*}
\rm{Sp}(\Delta_\omega)
&=\overline{\{m^kn^\ell:k,\ell\in\bZ\}} \\
&=\overline{\{m^k(m^{\frac{a}{b}})^\ell:k,\ell\in\bZ\}}\\
&=\overline{\{(m^\frac{1}{b})^{a\ell+bk}:k,\ell\in\bZ\}}\\
&=\{0,(m^{-\frac{1}{b}})^N:N\in\bZ\}.
\end{align*}
So $\pi(\O_\theta)''$ is of type III$_{m^{-\frac{1}{b}}}$.
\end{proof}

\subsection{A Special Case: $\theta=\id$}
\label{SubS:Oid}

We shall see that, in this case,
the assumption -- $\O_\theta^\sigma$ has a unique tracial state --  is \textit{redundant}.
The approach here is different from the one used in Subsection \ref{SS:general}.
It makes use of some known properties of Cuntz algebras, and may be of independent interest.

\begin{prop}
\label{P:AFDid}
$\pi(\O_\id)''$ is an AFD factor. It is of type III$_1$ if $\frac{\ln m}{\ln n}\not\in\bQ$;
otherwise, it is of type III$_{m^{-\frac{1}{b}}}$, where $b$ is the same as that in Theorem \ref{T:apefactor}. 
\end{prop}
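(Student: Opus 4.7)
The plan is to exploit the fact that when $\theta=\id$, the relations $e_if_j=f_je_i$ allow us to realize $\O_\id$ as a tensor product of two Cuntz algebras, reducing the entire question to known facts about Powers factors and their tensor products. This bypasses the need for the uniqueness-of-trace hypothesis on $\O_\theta^\sigma$ that was invoked in Subsection \ref{SS:general}.

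First I would set up the canonical isomorphism $\O_\id \cong \O_m \otimes \O_n$ via $s_{e_i}\mapsto s_i\otimes 1$ and $s_{f_j}\mapsto 1\otimes t_j$, where $s_1,\ldots,s_m$ and $t_1,\ldots,t_n$ denote the canonical generators of $\O_m$ and $\O_n$. Since the images satisfy the Cuntz relations and the $s_i\otimes 1$ commute with the $1\otimes t_j$, the universal property of $\O_\id$ yields a $\ast$-homomorphism $\O_\id\to\O_m\otimes\O_n$ which is injective by simplicity of $\O_\id$ (from \cite{DYperiod}) and surjective because its image contains all generators of $\O_m\otimes\O_n$. Evaluating on monomials $s_us_v^*$ one then verifies that the distinguished faithful state factors as $\omega=\omega_m\otimes\omega_n$, where $\omega_m$ and $\omega_n$ are the canonical KMS states of $\O_m$ and $\O_n$.

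Second, the GNS construction inherits this tensor factorization, giving
\[
\pi_\omega(\O_\id)''\,\cong\,\pi_{\omega_m}(\O_m)''\,\overline{\otimes}\,\pi_{\omega_n}(\O_n)''.
\]
By Powers' classical theorem, each factor on the right is the AFD (hyperfinite) factor of type III$_{1/m}$ (respectively III$_{1/n}$). Consequently $\pi(\O_\id)''$ is automatically a factor, and it is AFD as the tensor product of two AFD factors; no hypothesis on tracial states of $\O_\id^\sigma$ is required. For the type, I would argue as in the proof of Theorem \ref{T:apefactor}: since $\pi(\O_\id)''$ is a factor, $\S(\pi(\O_\id)'')=\mathrm{Sp}(\Delta_\omega)$, and the modular operator formula $\Delta_\omega(s_us_v^*)=\bn^{d(v)-d(u)}s_us_v^*$ from \cite{Yang} shows that $\mathrm{Sp}(\Delta_\omega)$ is the closure in $\bR$ of the multiplicative subgroup of $\bR^+$ generated by $m$ and $n$. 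If $\frac{\ln m}{\ln n}\notin\bQ$, this subgroup is dense, so $\S=\{0\}\cup\bR^+$ and the type is III$_1$. If $m^a=n^b$ with $\gcd(a,b)=1$, then choosing $p,q\in\bZ$ with $pa+qb=1$ exhibits $\bZ\ln m+\bZ\ln n=\frac{\ln m}{b}\bZ$, so $\S=\{0\}\cup\{m^{-N/b}:N\in\bZ\}$ and the type is III$_{m^{-1/b}}$.

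The main obstacle I expect to need care with is the tensor product identification itself: one must pin down the correct $C^*$-tensor norm (the minimal and maximal norms agree by nuclearity of the Cuntz algebras), confirm injectivity of the map from $\O_\id$ via simplicity, and verify the state factorization $\omega=\omega_m\otimes\omega_n$ by checking agreement on the spanning monomials $s_{e_u}s_{e_v}^*s_{f_\alpha}s_{f_\beta}^*\leftrightarrow s_us_v^*\otimes t_\alpha t_\beta^*$. Once those identifications are in place, the factoriality, AFD property, and type are immediate consequences of Powers' theorem plus the short multiplicative-group calculation above.
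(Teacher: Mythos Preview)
Your overall strategy coincides with the paper's: realize $\O_\id\cong\O_m\otimes\O_n$, check that $\omega=\omega_m\otimes\omega_n$ on monomials, and deduce $\pi(\O_\id)''\cong\pi_{\omega_m}(\O_m)''\,\overline\otimes\,\pi_{\omega_n}(\O_n)''$, whence it is an AFD type~III factor. That part is fine and essentially identical to what the paper does.

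The gap is in your type computation. You write ``since $\pi(\O_\id)''$ is a factor, $\S(\pi(\O_\id)'')=\mathrm{Sp}(\Delta_\omega)$,'' but factoriality of $\fM$ alone does \emph{not} give $\S(\fM)=\mathrm{Sp}(\Delta_\phi)$; in general one only has the inclusion $\S(\fM)\subseteq\mathrm{Sp}(\Delta_\phi)$. The equality used in the proof of Theorem~\ref{T:apefactor} rests on \cite[28.3]{Str}, which requires the \emph{centralizer} $\pi_\omega(\O_\theta^\sigma)''$ to be a factor---precisely the ``$\O_\theta^\sigma$ has a unique tracial state'' hypothesis you are trying to dispense with. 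So invoking that argument here is circular. The paper avoids this by computing with the Connes $T$-invariant instead: the tensor identity yields
\[
T(\pi(\O_\id)'')=T(\pi(\O_m)'')\cap T(\pi(\O_n)'')=\tfrac{2\pi}{\ln m}\bZ\cap\tfrac{2\pi}{\ln n}\bZ,
\]
from which $\Gamma$ and then $\S$ are read off directly. An equally valid patch, closer to your own line, is simply to cite the Araki--Woods/Connes result on tensor products of Powers factors: $R_{1/m}\,\overline\otimes\,R_{1/n}$ is of type III$_1$ when $\frac{\ln m}{\ln n}\notin\bQ$ and of type III$_{m^{-1/b}}$ when $m^a=n^b$ with $\gcd(a,b)=1$. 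Either route closes the argument without any appeal to $\mathrm{Sp}(\Delta_\omega)$.
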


\medskip
\noindent
\begin{proof}

We first notice that $\O_\id\cong\O_m\otimes\O_n$,
where the *-isomorphism maps $s_{e_{u_1}f_{v_1}}s_{e_{u_2}f_{v_2}}^*$ to
$s_{e_{u_1}}s_{e_{u_2}}^* \otimes s_{f_{v_1}}s_{f_{v_2}}^*$.
See, e.g.,  \cite{DYperiod, Eva}.
(One should notice that there is no confusion for the tensor product used here since Cuntz algebras are amenable.)
Let $\omega_m:=\omega|_{\O_m}$ and $\omega_n:=\omega|_{\O_n}$.
Then it is not hard to see that $\omega_m$ and $\omega_n$ are
the distinguished states of $\O_m$ and $\O_n$, respectively.

In the sequel, we first claim that
$\omega=\omega_m\otimes \omega_n$.
Recall that $s_{e_i}$'s and $s_{e_i}^*$'s commute with $f_j$'s and $f_j^*$'s
in this case as $\theta=\id$.
Now let $X:=s_{e_{u_1}f_{v_1}}s_{e_{u_2}f_{v_2}}^*=s_{e_{u_1}}s_{e_{u_2}}^*s_{f_{v_1}}s_{f_{v_2}}^*$ be an arbitrarily standard
generator of $\O_\id$.
It follows from \cite[Lemmar 5.1]{Yang} that
\begin{align*}
\omega(X)
&=\omega(s_{e_{u_1}f_{v_1}}s_{e_{u_2}f_{v_2}}^*)  \\
&=\omega(s_{e_{u_1}}s_{f_{v_1}}s_{f_{v_2}}^*s_{e_{u_2}}^*)\\
&=\delta_{|u_1|,|u_2|}\,\delta_{|v_1|,|v_2|}\tau(s_{e_{u_1}}s_{f_{v_1}}s_{f_{v_2}}^*s_{e_{u_2}}^*)\\
&=\delta_{u_1,u_2}\,\delta_{v_1,v_2}\, m^{-|u_1|}\,n^{-|v_1|}\\
&=\omega(s_{e_{u_1}}s_{e_{u_2}}^*)\,\omega(s_{f_{v_1}}s_{f_{v_2}}^*)\\
&=\omega_m(s_{e_{u_1}}s_{e_{u_2}}^*)\,\omega_n(s_{f_{v_1}}s_{f_{v_2}}^*)\\
&=\omega_m\otimes \omega_n(X)
\end{align*}
Thus we conclude that $\omega=\omega_m\otimes \omega_n$.

Now, by virtue of \cite[Theorem 4.9]{Take},
the GNS representations induced by $\omega, \omega_m, \omega_n$ have the following relation:
$$\pi_\omega=\pi_{\omega_m}\otimes \pi_{\omega_n}.$$
Since Cuntz algebras are amenable, it follows from \cite[Proposition 4.13]{Take} that
\begin{align}
\label{E:ts}
\pi(\O_\id)''=\pi_{\omega_m}(\O_m)''\otimes \pi_{\omega_n}(\O_n)''=:\pi(\O_m)''\otimes\pi(\O_n)''.
\end{align}
It is well-known that $\pi(\O_m)''$ and $\pi(\O_n)''$
both are type III factors. So is $\pi(\O_\id)''$ by \cite[Corollary 11.2.17, Proposition 11.2.26 ]{KadRin}.
Therefore $\pi(\O_\id)''$ is an AFD  type III factor.

From \eqref{E:ts} and \cite[Corollary 13.1.17 ]{KadRin}, it yields
$$T(\pi(\O_\id)'')=T(\pi(\O_m)'')\cap T(\pi(\O_n)'').$$
But it is well-known that $T(\pi(\O_n)'')=\frac{2\pi}{\ln n}\bZ$ ($n\ge 2$) (cf., e.g., \cite{OlePed}).
This gives rise to
$$T(\pi(\O_\id)'')=\frac{2\pi}{\ln m}\bZ\bigcap \frac{2\pi}{\ln n}\bZ.$$
As $\pi(\O_\id)''$ is of type III, we have $0\in \S(\pi(\O_\id)'')$.
Therefore,
\begin{itemize}
\item[(i)]
if $\frac{\ln m}{\ln n}\not\in\bQ$, then
 $\Gamma(\pi(\O_\id)'')=\bR$, and so
 $\S(\pi(\O_\id)'')=[0,+\infty)$;
\item[(ii)]
if $m^a=n^b$ with $\gcd(a,b)=1$, then
 $\Gamma(\pi(\O_\id)'')=(\frac{1}{b}\ln m) \bZ$, and so $\S(\pi(\O_\id)'')=\{0,(m^{-\frac{1}{b}})^k:k\in \bZ\}$.
\end{itemize}

Of course, in Case (i), $\pi(\O_\id)''$ is of type III$_1$, while in Case (ii), $\pi(\O_\id)''$ is of type III$_{m^{-\frac{1}{b}}}$.
\end{proof}

\smallskip

It is worthwhile to point out that, for \textit{any} aperiodic 2-graph $\Fth$, one actually always has a *-isomorphism, say $\psi$, between
$\O_\theta$ and $\O_m\otimes \O_n $
by the Kirchberg-Phillips classification theorem (\cite[Corollary 5.3]{Eva}).
However, unlike the *-isomorphism between $\O_\id$ and $\O_m\otimes \O_n $, in general we do not have an explicit formula for 
$\psi$. In particular, we can not conclude $\omega=\omega_m\otimes \omega_n$ as in the proof of Proposition \ref{P:AFDid}. This is why the
approach used there can not be applied to the general case in Subsection \ref{SS:general}.

\begin{rem}
If the assumption that  $\O_\theta^\sigma$ has a unique tracial state were redundant, then one could obtain
the following nice results (cf. Theorem \ref{T:factor}, Theorem \ref{T:apefactor}, and \cite{DYperiod}):
\textit{
\begin{align*}
\Fth\ \mbox{is aperiodic}
&\Longleftrightarrow \O_\theta \ \text{is simple}\\
&\Longleftrightarrow \pi(\O_\theta)'' \ \text{is a factor}\\
&\Longleftrightarrow
\omega \ \text{is the unique} \ \sigma\text{-KMS state over}\  \O_\theta.
\end{align*}
Moreover, there is a dichotomy for the type of $\pi(\O_\theta)''$:
it is of type III$_{m^{-\frac{1}{b}}}$ (or III$_{n^{-\frac{1}{a}}}$) if $\frac{\ln m}{\ln n}\in\bQ$,
where $a,b\in\bN$ satisfy $m^a=n^b$ and $\gcd(a,b)=1$; it is of type III$_1$ otherwise.
}

\smallskip
So it would be interesting and nice to know if our assumption is redundant in general.

\end{rem}

\end{document}